\documentclass[11pt,reqno,oneside]{article}
\usepackage{amsmath, amsthm, mathtools, amssymb, enumerate,esint}
\usepackage{graphicx}
\usepackage{xcolor}
\usepackage{caption}
\usepackage{datetime}
\usepackage{fancyhdr}
\usepackage{marginnote}
\usepackage[unicode,breaklinks=true,colorlinks=true,linkcolor=black,urlcolor=black,citecolor=black]{hyperref}
\usepackage{mdframed}
\usepackage{cancel}

\chardef\coloryes=1
\chardef\isitdraft=0
\chardef\forshowkeys=0
\chardef\showllabel=0
\chardef\refcheck=0
\chardef\sketches=0
\chardef\figure=0

\ifnum\forshowkeys=1
  \usepackage[notref,notcite,color]{showkeys}
\fi
\ifnum\refcheck=1
  \usepackage{refcheck}
\fi
\ifnum\isitdraft=1 
   \def\eqref#1{({\ref{#1}})}                

\numberwithin{equation}{section}

\allowdisplaybreaks[4]

\definecolor{refkey}{rgb}{.9,0.3,0.3}
\definecolor{labelkey}{rgb}{.5,0.1,0.1}
\else
  \def\startnewsection#1#2{\section{#1}\label{#2}\setcounter{equation}{0}}   
\numberwithin{equation}{section}
  \def\nnewpage{} 
\fi
\begin{document}
\def\restrx{\bigr|_{\partial\Omega}}  
\def\po{\partial\Omega}
\def\tdot{{\gdot}}
\def\intk{[k T_0, (k+1)T_0]}
\def\Dg{{D'g}}
\def\ua{u^{\alpha}}
\def\ques{{\colr \underline{??????}\colb}}
\def\nto#1{{\colC \footnote{\em \colC #1}}}
\def\fractext#1#2{{#1}/{#2}}
\def\fracsm#1#2{{\textstyle{\frac{#1}{#2}}}}   
\def\baru{U}
\def\nnonumber{}
\def\palpha{p_{\alpha}}
\def\valpha{v_{\alpha}}
\def\qalpha{q_{\alpha}}
\def\walpha{w_{\alpha}}
\def\falpha{f_{\alpha}}
\def\dalpha{d_{\alpha}}
\def\galpha{g_{\alpha}}
\def\halpha{h_{\alpha}}
\def\psialpha{\psi_{\alpha}}
\def\psibeta{\psi_{\beta}}
\def\betaalpha{\beta_{\alpha}}
\def\gammaalpha{\gamma_{\alpha}}
\def\TTalpha{T_{\alpha}}
\def\TTalphak{T_{\alpha,k}}
\def\falphak{f^{k}_{\alpha}}
\def\R{\mathbb R}
\def\norm#1{\left\Vert #1\right\Vert} 
\def\tanT{\textbf{T}}
\newcommand {\Dn}[1]{\frac{\partial #1  }{\partial N}}
\def\andand{\text{\indeq and\indeq}}
\def\mm{m}
\def\colr{{}}
\def\colg{{}}
\def\colb{{}}
\def\cole{{}}
\def\colA{{}}
\def\colB{{}}
\def\colC{{}}
\def\colD{{}}
\def\colE{{}}
\def\colF{{}}
\ifnum\coloryes=1
  \definecolor{coloraaaa}{rgb}{0.7,0.7,0.7}
  \definecolor{colorbbbb}{rgb}{0.1,0.7,0.1}
  \definecolor{colorcccc}{rgb}{0.8,0.3,0.9}
  \definecolor{colordddd}{rgb}{0.0,.5,0.0}
  \definecolor{coloreeee}{rgb}{0.8,0.3,0.9}
  \definecolor{colorffff}{rgb}{0.8,0.3,0.9}
  \definecolor{colorgggg}{rgb}{0.5,0.0,0.4}
  \definecolor{colorhhhh}{rgb}{0.7,0.7,0.7}
 \def\colg{\color{colordddd}}
 \def\coly{\color{colorhhhh}}
 \def\colb{\color{black}}
 \def\colr{\color{red}}
 \def\colu{\color{blue}}
 \def\cole{\color{colorgggg}}
  \def\cole{\color{black}}
  \def\colw{\color{coloraaaa}}
 \def\colA{\color{coloraaaa}}
 \def\colB{\color{colorbbbb}}
 \def\colC{\color{colorcccc}}
 \def\colD{\color{colordddd}}
 \def\colE{\color{coloreeee}}
 \def\colF{\color{colorffff}}
 \def\colG{\color{colorgggg}}
\fi
   \chardef\coloryes=1 
   \baselineskip=17pt
\pagestyle{myheadings}
\def\const{\mathop{\rm const}\nolimits}  
\def\HH{\text{H}}
\def\g{\geq}
\def\l{\leq}
\def\id{\mathop{\rm id}\nolimits}    
\def\diam{\mathop{\rm diam}\nolimits}    
\def\inprogress{ \text{\colC~IN~PROGRESS~}}

\ifnum\showllabel=1
 \def\llabel#1{\marginnote{\color{lightgray}\rm\small(#1)}[-0.0cm]\notag}
\else
 \def\llabel#1{\notag}
\fi

\def\rref#1{{\ref{#1}{\rm \tiny \fbox{\tiny #1}}}}
\def\theequation{\fbox{\bf \thesection.\arabic{equation}}}
\def\ccite#1{{\cite{#1}{\rm \tiny ({#1})}}}
\def\startnewsection#1#2{\newpage\colg \section{#1}\colb\label{#2}}
\setcounter{equation}{0}
\pagestyle{fancy}
\cfoot{}
\rfoot{\thepage}
\chead{}
\rhead{\thepage}
\def\nnewpage{\newpage}
\newcounter{startcurrpage}
\newcounter{currpage}
\def\llll#1{{\rm\tiny\fbox{#1}}}
   \def\blackdot{{\color{red}{\hskip-.0truecm\rule[-1mm]{4mm}{4mm}\hskip.2truecm}}\hskip-.3truecm}
   \def\bdot{{\color{blue} {\hskip-.0truecm\rule[-1mm]{4mm}{4mm}\hskip.2truecm}}\hskip-.3truecm}
   \def\purpledot{{\colA{\rule[0mm]{4mm}{4mm}}\colb}}
   \def\pdot{\purpledot}
   \def\gdot{{\colB{\rule[0mm]{4mm}{4mm}}\colb}}
\def\nts#1{{\hbox{\bf ~#1~}}} 
\def\igor#1{{\colr{\hbox{\bf IK: ~#1~}}}} 
\def\qi#1{{\hbox{\bf\color{purple} QX: ~#1~}}} 
\def\wojtek#1{{\hbox{\bf WO: ~#1~}}} 
\def\nts#1{{\colr\small\hbox{\bf ~#1~}}} 
\def\igor#1{{\colr\small\hbox{\bf IK: ~#1~}}} 
\def\ntsf#1{\footnote{\colb\hbox{\rm ~#1~}}} 
\def\bigline#1{~\\\hskip2truecm~~~~{#1}{#1}{#1}{#1}{#1}{#1}{#1}{#1}{#1}{#1}{#1}{#1}{#1}{#1}{#1}{#1}{#1}{#1}{#1}{#1}{#1}\\}
\def\biglineb{\bigline{$\downarrow\,$ $\downarrow\,$}}
\def\biglinem{\bigline{---}}
\def\biglinee{\bigline{$\uparrow\,$ $\uparrow\,$}}
\def\inon#1{\hbox{\ \ \ \ \ }\hbox{#1}}
\def\onon#1{\inon{on~$#1$}}
\def\inin#1{\inon{in~$#1$}}
\def\Omf{\Omega_{\text f}}
\def\Ome{\Omega_{\text e}}
\def\Gae{\Gamma_{\text e}}
\def\Gaf{\Gamma_{\text f}}
\def\Gac{\Gamma_{\text c}}
\def\wext{\tilde{w}}
\def\wexts{\widetilde{S w}}
\def\wexta{\overline{w}}
\def\wextb{\overline{\overline{w}}}
\def\mbar{{\overline M}}
\def\tilde{\widetilde}
\def\epsi{\epsilon_1}    
\def\epsj{\epsilon_2}     
\def\epsk{\bar\epsilon}     
\def\norm#1{\left\Vert #1\right\Vert} 
\def\nnorm#1{\Vert #1\Vert} 
\newtheorem{Theorem}{Theorem}[section]
\newtheorem{Corollary}[Theorem]{Corollary}
\newtheorem{Proposition}[Theorem]{Proposition}
\newtheorem{Lemma}[Theorem]{Lemma}
\newtheorem{Remark}[Theorem]{Remark}
\newtheorem{definition}{Definition}[section]

\def\theequation{\thesection.\arabic{equation}}
\def\endproof{\hfill$\Box$\\}
\def\square{\hfill$\Box$\\}
\def\comma{ {\rm ,\qquad{}} }            
\def\commaone{ {\rm ,\quad{}} }         
\def\dist{\mathop{\rm dist}\nolimits}    
\def\sgn{\mathop{\rm sgn\,}\nolimits}    
\def\Tr{\mathop{\rm Tr}\nolimits}    
\def\curl{\mathop{\rm curl}\nolimits}    
\def\div{\mathop{\rm div}\nolimits}    
\def\supp{\mathop{\rm supp}\nolimits}    
\def\divtwo{\mathop{{\rm div}_2\,}\nolimits}    
\def\re{\mathop{\rm {\mathbb R}e}\nolimits}    
\def\indeq{\qquad{}\!\!\!\!}                     
\def\period{.}                           
\def\semicolon{\,;}                      
\newcommand{\cD}{\mathcal{D}}
\newcommand{\eqnb}{\begin{equation}}
\newcommand{\eqne}{\end{equation}}
\newcommand{\na}{\nabla }
\newcommand{\bog}{b}
\newcommand{\bb}{\nu }
\newcommand{\ww}{{\overline{w}}}
\newcommand{\la}{\lambda }
\newcommand{\p}{\partial }
\newcommand{\N}{\mathbb{N}}
\newcommand{\T}{\mathbb{T}}
\renewcommand{\R}{\mathbb{R}}
\newcommand{\lec}{\lesssim  }
\newcommand{\gec}{\gtrsim  }
\newcommand{\tom}{\tilde{\omega}}
\newcommand{\tu}{\tilde{u}}
\newcommand{\un}{^{(n)}}
\newcommand{\unp}{^{(n+1)}}
\newcommand{\unm}{^{(n-1)}}
\newcommand{\lo}{L^2(\Omega)}
\newcommand{\lio}{L^\infty(\Omega)}
\newcommand{\ls}{L^2(S)}
\newcommand{\lis}{L^\infty(S)}

\title{Analyticity up to the boundary for the divergence equation}
\author{Igor~Kukavica and Qi Xu}

\maketitle
\date{}
\medskip
\begin{abstract}
We address analytic regularity for the divergence equation $\div u = f$ in~$\Omega$,
with $u=0$ on~$\partial\Omega$, where $\Omega$ is an arbitrary bounded analytic domain
and $\int_{\Omega} f\,dx=0$.
If $f$ is analytic on $\overline{\Omega}$, then we prove that there exists a solution that is analytic on $\overline{\Omega}$.
\end{abstract}

\noindent\thanks{\em Keywords:\/}
divergence equations, Stokes systems, analyticity regularity
\section{Introduction}
\label{Sec1}
The existence and regularity of $W_0^{k,p}$-solutions, for
$1<p<\infty$ and $k\in \mathbb{N}_0$, of the $n$-dimensional
divergence problem
\begin{align}
\begin{split}
    \div u&=f\inin{\Omega},
    \\
    u&=0 \onon{\partial\Omega}
    \label{EQ01}
\end{split}
\end{align}
is classical; see~\cite{AF,L, LS}.
Bogovski\u{\i}~\cite{B} constructed a solution operator of \eqref{EQ01} in the case where the domain is a finite union of star-shaped subdomains and also provided the corresponding regularity estimates by combining the Calderón–Zygmund theory with a partition-of-unity argument. The reader is referred to the book of Galdi~\cite{G} for a detailed exposition.
Takahashi~\cite{T} extended Bogovski\u{\i}’s formula for a bounded domain star-shaped with respect to an open ball. Furthermore, Mitrea~\cite{M} studied the case where $u$ is divergence-free and the trace of $u$ belongs to Besov or Triebel-Lizorkin spaces by treating differential forms with  Calder\'on-Zygmund theory and de~Rham theory.
Costabel and McIntosh~\cite{CM} proved the generalized Bogovski\u{\i} integral operator, acting
on differential forms in $\R^n$, is pseudodifferential operator
of order~$-1$. 
In H\"older domains, Kapitanskii and Piletskas~\cite{KP} used a dimension-reduction argument in $\R^d$, where $d\geq 2$, to obtain an explicit method of constructing solutions to~\eqref{EQ01} together with the corresponding H\"older regularity estimates. Constructive results for bounded Lipschitz domains and for $\mathbb{T}^d$ was established by Bourgain and Brezis~\cite{BB} using a nonlinear method. More recently, Chan, Chen, and Su~\cite{CCS} constructed explicit real-analytic solutions to~\eqref{EQ01} for the case of an annulus. 

In the present paper, we prove the existence of real-analytic solutions for equation~\eqref{EQ01} for any bounded analytic domain $\Omega$, as stated in our main result, stated next.

\begin{Theorem}
\label{main thm}
Assume that $f$ is a real-analytic function
on $\overline\Omega$
satisfying the compatibility
condition $\int_{\Omega}f\,d x=0$.
Then there exists
a solution to \eqref{EQ01} that is real analytic on~$\overline{\Omega}$.
\end{Theorem}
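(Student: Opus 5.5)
We reduce the problem to the Stokes system, for which analytic regularity up to the boundary is available. Specifically, we take $(u,p)$ to be the solution of the inhomogeneous-divergence Stokes problem
\begin{align}
\begin{split}
 -\Delta u+\nabla p&=0\inin{\Omega},
 \\
 \div u&=f\inin{\Omega},
 \\
 u&=0\onon{\partial\Omega},
 \llabel{EQ02}
\end{split}
\end{align}
normalized by $\int_\Omega p\,dx=0$. The compatibility condition $\int_\Omega f\,dx=0$ is exactly what makes this problem solvable. Existence of a unique solution $(u,p)\in H_0^1\times L^2_0$ follows from the classical theory: first use the Bogovski\u{\i} operator to produce some $w\in H_0^1$ with $\div w=f$, then solve the homogeneous-divergence Stokes problem for $u-w$ by Lax--Milgram on divergence-free fields, and recover $p$ by de~Rham's theorem. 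Since $f$ is smooth up to the boundary, the Cattabriga/Agmon--Douglis--Nirenberg estimates
\[
\Vert u\Vert_{H^{k+2}}+\Vert p\Vert_{H^{k+1}}\lesssim_k \Vert f\Vert_{H^{k+1}}
\]
give $u,p\in C^\infty(\overline\Omega)$. It then remains to prove that this $u$ is analytic on $\overline\Omega$.

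For the interior, we apply the divergence to the first equation to get $\Delta p=\Delta f$, so $p-f$ is harmonic. Then $\Delta u=\nabla p$ with $p$ analytic, and interior analytic regularity for elliptic systems with analytic data gives interior analyticity of $u$. Near the boundary, we flatten $\partial\Omega$ locally by an analytic diffeomorphism. The system becomes an ADN-elliptic system with analytic coefficients, analytic right-hand side (coming from $f$), and Dirichlet data satisfying the complementing condition. We then prove Cauchy-type bounds
\[
\Vert \partial^\alpha u\Vert + \Vert\partial^\alpha p\Vert\le C M^{|\alpha|}|\alpha|!
\]
by induction on $|\alpha|$.

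The induction proceeds in two stages. First we treat tangential derivatives $\partial_{\tan}^{\alpha'}$, which preserve the boundary condition. Applying them to the localized system and using the $H^2\times H^1$ Stokes estimate, with commutators from cutoffs and variable coefficients, we get a recursive inequality on a family of shrinking neighborhoods. Following the Morrey--Nirenberg / Kato--Masuda approach, the nested cutoffs are chosen so that the losses from the cutoff derivatives, of size $\sim |\alpha|/\delta$ on strips of width $\delta/|\alpha|$, sum to a factorial. Second, we recover normal derivatives algebraically from the equations. The equation $\div u=f$ expresses $\partial_n u_n$ in terms of tangential derivatives and $f$. The tangential momentum equations express $\partial_n^2 u_{\tan}$ in terms of $\nabla_{\tan}p$ and tangential derivatives. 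The normal momentum equation, combined with the derivative of the divergence equation, expresses $\partial_n p$. Iterating, any mixed derivative $\partial_n^j\partial_{\tan}^{\alpha'}$ can be bounded by derivatives with fewer normal directions plus derivatives of $f$. This is the standard induction on the number of normal derivatives, and the analytic bounds on $f$ feed in with the same factorial growth.

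The main obstacle is the bookkeeping in this boundary induction. The pressure only appears with one fewer derivative than $u$, so the induction must be run on the pair $(u,p)$ with matched weights. We also have to verify that the commutator sums with analytic coefficients, bounded via $\sum\binom{\alpha}{\beta}|\beta|!|\alpha-\beta|!\lesssim|\alpha|!$-type combinatorial inequalities, close with a uniform constant $M$. Alternatively, one could invoke the general analytic-regularity theorem for ADN-elliptic boundary value problems with analytic coefficients and data (Morrey--Nirenberg) directly, provided the Stokes system with Dirichlet data is verified to satisfy its hypotheses. We would first attempt this citation and fall back on the explicit induction above. Finally, covering $\overline\Omega$ by finitely many boundary patches and one interior set, by compactness, gives analyticity on $\overline\Omega$, and hence the solution claimed in Theorem~\ref{main thm}.
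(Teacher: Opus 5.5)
Your proposal is correct, and its first step matches the paper's: both arguments take the solution to be the velocity of the Dirichlet Stokes problem \eqref{EQ12}. The two proofs differ in how they obtain analyticity of that Stokes solution up to $\partial\Omega$.

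\textbf{Your route.} You keep $u=0$ on $\partial\Omega$ with $\div u=f$, so after flattening, tangential derivatives preserve the zero trace. You then work locally in analytic charts, in one of two ways:
\begin{enumerate}
\item cite the Morrey--Nirenberg boundary-analyticity theorem for Douglis--Nirenberg elliptic systems with complementing boundary conditions (the Stokes--Dirichlet system satisfies these hypotheses; this is classical); or
\item run the nested-cutoff induction, recovering $\partial_n u_n$, $\partial_n^2 u_{\tan}$ and $\partial_n p$ algebraically from the equations, and patch the charts together by compactness.
\end{enumerate}

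\textbf{The paper's route.} It first subtracts $\nabla\phi$, with $\phi$ solving \eqref{EQ13} on the larger domain $\Omega'$, to reach the divergence-free system \eqref{EQ15} with analytic boundary data. It then argues globally, with no cutoffs or charts:
\begin{enumerate}
\item it uses Komatsu's analytic vector fields of Proposition~\ref{P01}, which are tangential but non-commuting and may vanish on parts of $\partial\Omega$;
\item it uses a single weighted norm with weights $\epsi^i\epsj^j/(i+j)!$ on $\partial_x^i\tanT^j$, with the pressure carrying one fewer derivative;
\item it reduces normal derivatives through the $H^2$ Stokes estimate (Lemma~\ref{L01}) rather than by solving the equations for them;
\item it controls commutators with the Leibniz-type bounds of Lemma~\ref{L03};
\item it closes everything by a single absorption after choosing $\epsi,\epsj$ small.
\end{enumerate}

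\textbf{What each buys.} Your citation route is the shortest rigorous proof of the qualitative statement, but it is entirely a black box. Your explicit route is self-contained, but it carries the shrinking-domain bookkeeping and constants that come only from compactness. The paper's global system of vector fields trades the cutoff losses for commutator control of degenerate tangential fields. In return it gives a linear a priori bound $\rho(u)\lesssim\rho(f)$ in a fixed analytic norm, and hence a quantitative analyticity radius for $u$ determined by that of $f$ and the geometry of $\Omega$.
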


A key obstacle is the nonuniqueness for the problem~\eqref{EQ01}. The Bogovski\u{\i} operator itself relies on partition-of-unity arguments
dividing the domain into
star-shaped subdomains, which is not suited for analyticity.
Instead, we reduce the construction of solutions to~\eqref{EQ01} to the analysis of a suitable stationary Stokes system, which is discussed in detail in Section~\ref{Sec3}. For the resulting Stokes system, tangential and normal derivatives do not commute and tangential vector fields may vanish on certain subsets of the boundary. We extend the boundary derivative system by incorporating Komatsu's system~\cite{K1,K2} of tangential vector fields in Section~\ref{Sec2}. We shall apply a reduction argument for normal derivatives and introduce three Leibniz-type estimation for commutator terms in Section~\ref{Sec4}. Finally, Section~\ref{Sec5} contains the proof of the analytic regularity by choosing a suitable equivalent norm for uniformly real-analytic functions. 

\section{Analytic vector fields in a bounded domain }
\label{Sec2}
 Throughout the paper, assume that $\Omega$ is a bounded domain in $\mathbb{R}^d$, where $d\in\mathbb{N}$, with analytic boundary~$\partial \Omega$. 
We will use the notation
  \begin{equation}
    \| \cdot \|_{p} \coloneqq \| \cdot \|_{L^p (\Omega )}, \qquad \| \cdot \| \coloneqq \| \cdot \|_{2}.
   \llabel{EQ02}
   \end{equation}

\subsection{Analytic vector fields}
Denote by $\delta(x)$ the signed distance function to the boundary $\partial\Omega$,
which is positive in $\Omega$ and negative in~$\mathbb{R}^d\backslash\Omega$. For $\delta_0>0$, define
  \begin{equation}
   \Omega_{\delta_0} = \{ x \in \Omega: \delta(x) < \delta_0\}
   \andand \Omega^{\delta_0}
   = \Omega \setminus \overline{\Omega}_{\delta_0}.
   \llabel{EQ03}
  \end{equation} 
  We call a vector field $X$  \textit{tangential} along
  $\partial\Omega$ provided that $X \delta = 0$
on~$\partial\Omega$. 
Such $X$ induces an operator on functions defined on the boundary. More precisely, for any
$f\in C^\infty(\partial\Omega)$, we set $Xf := (X \tilde{f} ) \restrx$, 
where $\tilde{f} \in C^{\infty} (\overline{\Omega})$ is an arbitrary smooth extension of~$f$.

\begin{Remark}
\label{R03}
{\rm
    A function $f:\overline{\Omega}\to\mathbb{R}^k$, where $k\in\mathbb{N}$, is said to be real-analytic on $\overline \Omega$
    if there exists an open domain $\Omega'\supset\overline{\Omega}$
and a real-analytic extension $\tilde{f}\colon\Omega'\to \R^k$ such that
$    \tilde f\big|_{\overline{\Omega}}=f$.
}
\end{Remark}

The existence of global analytic vector fields stated in the next
proposition was established by Komatsu~\cite{K2}. The statement reads
as follows.

\begin{Proposition}[\text{\cite[Section~2]{K2}}]
\label{P01}
There exist analytic vector fields 
$X_0$, $T_1, \ldots, T_{N'}$, $T_{N'+1}, \ldots, T_N$ for any sufficiently small $\delta_0>0$
defined globally on $\overline{\Omega}$ satisfying the following properties: 
\begin{enumerate}
\item[1.]
$T_1, \ldots , T_N$ are \textit{tangential} to~$\partial\Omega$.
\item[2.]
On $\overline{\Omega}$,
we can represent
  \begin{equation}
   \frac{\partial}{\partial x_k} = \xi_k (x)  X_0 + \sum_{j=1}^{N} \eta_{jk} (x) T_j \comma k=1, \ldots,d, 
   \llabel{EQ05}
  \end{equation}
with coefficient $\xi_k (x)$ and $\eta_{jk}(x)$ which are analytic on~$\overline\Omega$.
\item[3.]
On $\overline{\Omega}^{\delta_0}$, we have
\begin{equation}
\frac{\partial}{\partial x_k} = \sum_{j=1}^{N'} \zeta_{jk} (x)  T_j \comma k=1,\ldots, d,
   \llabel{EQ06}
\end{equation}
where the coefficients $\zeta_{jk} (x)$ are analytic on~$\overline{\Omega}^{\delta_0}$.
\end{enumerate}
\end{Proposition}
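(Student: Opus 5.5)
The plan is to construct the vector fields explicitly from the analytic signed distance function $\delta$ and the ambient coordinate derivatives, as in Komatsu's construction. Since $\partial\Omega$ is analytic and compact, $\delta$ is real analytic on a neighborhood of $\partial\Omega$, say on $\{|\delta|<2\delta_1\}$, and there $|\nabla\delta|=1$. Choose a real-analytic function $\chi$ on a neighborhood of $\overline\Omega$ that approximates $\nabla\delta$ near the boundary. Concretely, let $\nu$ be an analytic vector field on a neighborhood of $\overline\Omega$ with $\nu=\nabla\delta$ near $\partial\Omega$ up to a small error; such a field can be obtained by approximating a smooth cutoff times $\nabla\delta$ by a global analytic field, e.g.\ via the heat kernel or Whitney approximation, keeping the exact boundary values. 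A cleaner option is simply to set $X_0=\sum_k \partial_k\delta\,\partial_k$ where $\delta$ is replaced by a global analytic defining function $\rho$ of $\Omega$: $\rho>0$ in $\Omega$, $\rho=0$ and $\nabla\rho\neq0$ on $\partial\Omega$. Such a $\rho$ exists by approximating $\delta$ (cut off) by an analytic function in the $C^1$ topology and correcting the zero set, using analyticity of $\partial\Omega$. Then set $X_0=\nabla\rho\cdot\nabla$.

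For the tangential fields I will use the rotation fields
\[
T_{jk}=\partial_j\rho\,\partial_k-\partial_k\rho\,\partial_j,\qquad 1\le j<k\le d,
\]
and the fields $T'_k=\rho\,\partial_k$, $k=1,\dots,d$. Each is analytic on $\overline\Omega$. Each is tangential: we have $T_{jk}\rho=0$ identically and $T'_k\rho=\rho\,\partial_k\rho=0$ on $\partial\Omega$. Since $\rho$ and $\delta$ have proportional gradients on $\partial\Omega$, tangency with respect to $\rho$ coincides with tangency with respect to $\delta$. This gives property~1.

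For property~2, I use the algebraic identity
\[
|\nabla\rho|^2\partial_k=\partial_k\rho\,X_0+\sum_j\partial_j\rho\,T_{jk}
\]
(with $T_{jk}=-T_{kj}$). This yields the representation wherever $|\nabla\rho|\ne0$, with coefficients $\partial_k\rho/|\nabla\rho|^2$. To cover the set where $\nabla\rho$ vanishes inside $\Omega$, I combine this with $\rho\,\partial_k=T'_k$. Because $\rho^2+|\nabla\rho|^2$ is analytic and strictly positive on $\overline\Omega$, I write
\[
\partial_k=\frac{|\nabla\rho|^2\partial_k+\rho\cdot\rho\,\partial_k}{\rho^2+|\nabla\rho|^2}
=\frac{\partial_k\rho\,X_0+\sum_j\partial_j\rho\,T_{jk}+\rho\,T'_k}{\rho^2+|\nabla\rho|^2}.
\]
These coefficients are analytic on $\overline\Omega$.

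For property~3, on $\overline\Omega^{\delta_0}$ we have $\rho\ge c(\delta_0)>0$, because $\rho$ is positive on the compact set and comparable to $\delta$ near the boundary. Hence $\partial_k=\rho^{-1}T'_k$ with analytic coefficient. Ordering the fields so that $T_1,\dots,T_{N'}$ are the $T'_k$ (so $N'=d$) and the remaining ones are the $T_{jk}$ gives the statement, with $N=d+d(d-1)/2$.

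The main obstacle is constructing the global analytic defining function $\rho$ on a neighborhood of $\overline\Omega$. I would take it from the analyticity of $\delta$ near $\partial\Omega$, extending it into the interior analytically via Grauert/Whitney-type analytic approximation: approximate a smooth positive extension in $C^1$ by a real-analytic function, then multiply by a correction so that the zero set is exactly $\partial\Omega$. If that is delicate, I would instead cite the existence of such $\rho$ as standard (Komatsu uses it).
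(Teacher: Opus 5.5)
The paper gives no proof of this proposition; it simply quotes it from Komatsu. Your explicit construction from a global analytic defining function is therefore more self-contained, and its algebraic part is correct. With $X_0=\nabla\rho\cdot\nabla$, $T_{jk}=\partial_j\rho\,\partial_k-\partial_k\rho\,\partial_j$ and $T'_k=\rho\,\partial_k$ you have
\[
(\rho^2+|\nabla\rho|^2)\,\partial_k=\partial_k\rho\,X_0+\sum_{j}\partial_j\rho\,T_{jk}+\rho\,T'_k .
\]
All $T_{jk}$ and $T'_k$ are tangential, because $\nabla\rho$ is normal on $\partial\Omega$. By compactness the denominator has a positive lower bound on a neighborhood of $\overline\Omega$. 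Finally, $\partial_k=\rho^{-1}T'_k$ gives property~3 with $N'=d$, in fact for every $\delta_0>0$. So everything rests on one input: a real-analytic $\rho$ on a neighborhood of $\overline\Omega$ with $\rho>0$ in $\Omega$, $\rho=0$ and $\nabla\rho\neq0$ on $\partial\Omega$. You correctly identify this as the crux, but the argument you sketch for it does not work.

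If $h$ is a real-analytic $C^1$-approximation of a smooth extension of $\delta$, its zero set near $\partial\Omega$ is a small perturbation of $\partial\Omega$, and in general it is not $\partial\Omega$ itself. Multiplying $h$ by an analytic factor cannot remove zeros of $h$ that lie inside $\Omega$. Producing zeros exactly on $\partial\Omega$ would require a factor vanishing exactly on $\partial\Omega$, which is the original problem. Whitney- or Grauert-type approximation alone never pins down a zero set exactly. The same objection applies to your first idea of approximating $\nabla\delta$ while ``keeping the exact boundary values''.

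You need a genuinely global device. The simplest is the torsion function: let $w$ solve $-\Delta w=1$ in $\Omega$, $w=0$ on $\partial\Omega$.
\begin{itemize}
\item The strong maximum principle gives $w>0$ in $\Omega$.
\item Hopf's lemma gives $\partial_\nu w<0$ on $\partial\Omega$.
\item The Morrey--Nirenberg theorem (analyticity up to the boundary for the Dirichlet problem on an analytic domain with analytic data) shows that $w$ extends analytically to a neighborhood of $\overline\Omega$.
\end{itemize}
So $\rho=w$ works. This is the same kind of input the paper already invokes for its harmonic estimate on $\nabla\phi$.

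Alternatively, the ideal sheaf of $\partial\Omega$ is generated by $\delta$ near $\partial\Omega$ and, away from $\partial\Omega$, by the locally constant function equal to $1$ in $\Omega$ and $-1$ outside. The transition function is then $|\delta|>0$, and Cartan's Theorem~B ($H^1(\cdot,\mathcal{O})=0$, applied to $\log|\delta|$) yields a global generator, which is the desired $\rho$. With either argument, or with an explicit citation of the existence of a global analytic defining function, your proof is complete.
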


\begin{Remark}
\label{R02}
{\rm
Denote $I = \{1, \ldots, N \}$.
We introduce the following agreement for full-space derivatives $\partial_{x}^{j}$ and tangential derivatives~$\tanT^{k}$.
The symbol $\tanT^{k}$, where $k\in{\mathbb N}$,
is interpreted in a tensorial sense, that is, it
represents the collection of all possible operators 
$T_{\beta_1} \cdots T_{\beta_k}$,
where $\beta = (\beta_1, \ldots, \beta_k) \in I^k$. We adopt a similar agreement for~$\partial_x^{j}$.
When such symbols appear inside a norm, they should be understood in the following way. We define
\begin{equation}
 \Vert\partial^{j}_x \tanT^{k} u \Vert 
 = \sum_{\substack{\alpha \in \mathbb{N}_{0}^{d} , |\alpha|=j \\ \beta \in I^k}}
 \Vert \partial^{\alpha}_x \tanT^{\beta} u \Vert,
\llabel{EQ07}
\end{equation}
where $\partial_x^{\alpha}= \partial_{x_1}^{\alpha_1}\cdots\partial_{x_d}^{\alpha_d}$ and 
$\tanT^{\beta} = T_{\beta_1} \cdots T_{\beta_k}$ 
with $\beta = (\beta_1, \ldots, \beta_k) \in I^k$, for $j \in \mathbb{N}_0$ and $k \in \mathbb{N}$.
Analogously, we use the definitions
\begin{equation}
\Vert \partial^{j}_x \tanT^{k} u \Vert_{ \dot{H}^1} = 
\Vert \partial^{j+1}_x \tanT^{k}u \Vert
\llabel{EQ08}
\end{equation}
and
\begin{equation}
\Vert \partial^{j}_x \tanT^{k} u \Vert_{ \dot{H}^2} = 
\Vert \partial^{j+2}_x \tanT^{k} u \Vert
\llabel{EQ09}
\end{equation}
}\rm to represent the homogeneous Sobolev norms.
\end{Remark}

\subsection{Equivalent norm of analytic functions}
We introduce the index sets
  \begin{equation}
   B
   =\bigl\{
     (i,j): 
    i,j\in{\mathbb N}_0,
    i+j\ge 3
    \bigr\}
    \andand
    B^{c}={\mathbb N}_{0}^{2}\backslash B,
   \llabel{EQ10}
  \end{equation}
distinguishing the two cases depending on the order.
For the divergence system \eqref{EQ01}, we define  
  \begin{align}
   \begin{split}
\rho (u) & = \sum_B \frac{1}{(i+j)!} \epsi^{i} \epsj^{j} 
 \Vert  \partial_{x}^{i} \tanT^{j}  u \Vert
  + \sum_{B^{c}} 
  \frac{1}{(i+j)!} \epsi^{i} \epsj^{j}
  \Vert \partial_{x}^{j} \tanT^{j}  u\Vert
  =
  \overline{\rho}(u) + \rho_{0} (u) \label{EQ11}
  ,
   \end{split} 
\end{align} where $\epsilon_1,\epsj$ are determined in Section~\ref{Sec5} below.
Using a standard analytic argument, we can show that the definition of uniformly real-analytic function on $\overline{\Omega}$ is equivalent to the finiteness of the norm $\rho(f)$ defined in~\eqref{EQ11}
for some $\epsilon_1,\epsilon_2>0$.

\section{Reduction of the Divergence Equation to the Stokes System}
\label{Sec3}
To address the divergence equation, we add an additional constraint;
namely, we seek $u$ which solves the
Stokes system
\begin{align}
    \begin{split}
        -\Delta u+\nabla p&=0\inin{\Omega},
        \\
        \nabla\cdot u&=f\inin{\Omega},
        \\
        u&=0 \onon{\partial\Omega}.
        \label{EQ12}
    \end{split}
\end{align}
In order to transform~\eqref{EQ12} into the divergence-free form, we
first solve the Laplace boundary value problem
\begin{align}
    \begin{split}
        -\Delta \phi&=f\inin{\Omega}',
        \\
        \phi&=0\onon{\partial\Omega}',
        \label{EQ13}
    \end{split}
\end{align}
where $\Omega'\supset \overline\Omega$ was introduced in Remark~\ref{R03}.
Setting
\begin{align}
    (v,q)=(u+\nabla\phi,p-f)
    ,
    \label{EQ14}
\end{align} 
we can easily verify that
\begin{align}
    \begin{split}
        -\Delta v+\nabla q&=0\inin{\Omega},
        \\
        \nabla\cdot v&=0\inin{\Omega},
        \\
        v&=\nabla\phi   \onon{\partial\Omega}. 
        \label{EQ15}
    \end{split}
\end{align}
Thus, we only need to prove the regularity estimate for the system~\eqref{EQ15}.
Let
\begin{align}
\begin{split}
    \psi(v,q)
    &=
    \sum_{i,j\geq0}\frac{\epsi^i\epsj^j}{(i+j)!}\|\partial_x^i\tanT^jv\|
    +\sum_{i\geq 2,j\geq0}\frac{\epsi^i\epsj^j}{(i+j)!}\|\partial_x^{i-1}\tanT^jq\|
       \\&\indeq
    +\sum_{i=1,j\geq0}\frac{\epsi\epsj^j}{(1+j)!}\|\tanT^jq\|
    +\frac{\epsj^j}{j!}\sum_{i=0,j\geq 1}\|\tanT^{j-1}q\|
    .
    \end{split}
   \llabel{EQ16}
\end{align}
According to the standard-regularity result for the Stokes system, we have
\begin{align}
    \begin{split}
        \rho_0(v)+\rho_0(q)\leq C\rho(f)
	.
    \end{split}
    \llabel{EQ17}
\end{align}
It now remains to establish the analytic regularity estimate for higher-order derivatives.

\begin{Theorem}[Stokes equations] \label{main thm_Stokes}
There exist 
$   0< \epsj\leq \epsi \leq 1$,
depending only on the dimension~$d$ and the analyticity radius  of the tangential vector field $\tanT$, such that for any $f$ satisfying the compatibility condition $\int_{\Omega}f\,d x=0$, the solution $(v,q)$ of \eqref{EQ13}--\eqref{EQ15} exists and satisfies the estimate 
\begin{align}
\rho(v)
\lec \rho (f)
,
\llabel{EQ18}
\end{align}
where $f$ is defined as~\eqref{EQ01}.
\end{Theorem}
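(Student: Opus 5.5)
Existence is standard: $\phi$ is obtained from the Dirichlet problem \eqref{EQ13} on $\Omega'$. Since $\int_{\Omega}f\,dx=0$ and $\phi=0$ on $\partial\Omega'$, the boundary datum $\nabla\phi$ on $\partial\Omega$ satisfies $\int_{\partial\Omega}\nabla\phi\cdot\nu\,dS=\int_\Omega \Delta\phi\,dx=-\int_\Omega f\,dx=0$ after a normalization. If a different $\Omega'$ spoils this, I would instead subtract from $\nabla\phi$ a fixed analytic field carrying the flux. The Stokes problem \eqref{EQ15} then has a unique solution $(v,q)$, with $q$ normalized to have mean zero. The estimate is proved a priori, first for finite truncations of $\psi$, i.e. sums over $i+j\le n$, and then by letting $n\to\infty$. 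The base terms $B^c$ are controlled by the standard Stokes bound $\rho_0(v)+\rho_0(q)\le C\rho(f)$. Analyticity of $\phi$ up to $\overline\Omega$ together with the fact that $\overline\Omega\subset\Omega'$ gives $\rho(\nabla\phi)\lesssim\rho(f)$ for small $\epsi,\epsj$.

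\textbf{Tangential derivatives.} Apply $\tanT^j$ to \eqref{EQ15}. Because the $T_k$ are tangential, $\tanT^j v$ has boundary trace determined by $\tanT^j\nabla\phi$ restricted to $\partial\Omega$. The pair $(\tanT^jv,\tanT^jq)$ satisfies a Stokes system with forcing $[\Delta,\tanT^j]v-[\nabla,\tanT^j]q$ and divergence $[\div,\tanT^j]v$. The $H^2\times H^1$ Stokes estimate then gives
\begin{align*}
\|\partial_x^2\tanT^j v\|+\|\partial_x\tanT^j q\|\lesssim \|[\Delta,\tanT^j]v\|+\|[\nabla,\tanT^j]q\|+\|[\div,\tanT^j]v\|_{H^1}+\|\tanT^j\nabla\phi\|_{H^{2}}+\text{lower order}.
\end{align*}
The commutators are expanded with the Leibniz rule: $[\partial_x,T^\beta]$ is a sum over positions of products of derivatives of the analytic coefficients of $T$ with fewer tangential derivatives. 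Analyticity of the coefficients gives factors $C^{l}l!$, and after multiplying by $\epsj^j/(i+j)!$ the sums are bounded by $C\epsj\psi(v,q)$. Summation requires the usual combinatorial inequality $\binom{j}{l}\frac{l!\,(i+j-l)!}{(i+j)!}\le 1$.

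\textbf{Normal derivatives.} For mixed terms $\partial_x^i\tanT^j$ with $i\ge3$, I would use Proposition~\ref{P01} to write $\partial_{x_k}=\xi_kX_0+\sum\eta_{jk}T_j$, which reduces everything to pure $X_0$-derivatives. In $\Omega^{\delta_0}$ there is no boundary, so interior analyticity, or item 3 with tangential fields only, handles this region. Near the boundary, I would solve the equations for the normal derivatives. The divergence equation expresses $X_0 v\cdot\nu$ through tangential derivatives of $v$. The normal component of the momentum equation then gives $X_0 q$ in terms of $X_0^2 v_\nu$ and tangential terms, which are themselves tangential after using the divergence. The tangential components of the momentum equation give $X_0^2 v_\tau$ in terms of $\nabla q$ and $\tanT^2 v$. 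This produces a recursion
\begin{align*}
\|\partial_x^{i}\tanT^j v\|+\|\partial_x^{i-1}\tanT^jq\|\lesssim \|\partial_x^{i-2}\tanT^{j+2}v\|+\|\partial_x^{i-2}\tanT^{j+1}q\|+\text{commutators},
\end{align*}
which trades two normal derivatives for two tangential ones. With weights $\epsi^i\epsj^j$, this costs a factor $(\epsj/\epsi)^2$, which explains the requirement $\epsj\le\epsi$. The resulting series is summed by induction on $i$.

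\textbf{Closing the estimate and the main obstacle.} Combining the two steps gives $\psi(v,q)\le C\rho(f)+C(\epsi+\epsj/\epsi+\epsj)\psi(v,q)$ for the truncated norms, and these are finite by finite-order regularity. I would choose $\epsi$ small and then $\epsj\ll\epsi$ so that the lower-order terms are absorbed. The main obstacle is the commutator bookkeeping: the $T_k$ do not commute with each other or with $X_0$, and the $T_k$ vanish on parts of $\partial\Omega$, so the reduction must use the full Komatsu family. The three Leibniz-type estimates must be organized so that every commutator loses at most one derivative, and so that the factorial weights $1/(i+j)!$ absorb the binomial sums with only a geometric constant.
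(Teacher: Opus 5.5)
\textbf{The weight bookkeeping in your normal-derivative step is reversed, and this breaks the absorption.} Your recursion bounds $\|\partial_x^i\tanT^jv\|$ by $\|\partial_x^{i-2}\tanT^{j+2}v\|$. Both have total order $i+j$, so the factorial is unchanged and
\begin{align*}
\frac{\epsi^i\epsj^j}{(i+j)!}\|\partial_x^{i-2}\tanT^{j+2}v\|
=\Bigl(\frac{\epsi}{\epsj}\Bigr)^{2}\,\frac{\epsi^{i-2}\epsj^{j+2}}{(i+j)!}\|\partial_x^{i-2}\tanT^{j+2}v\| .
\end{align*}
The cost is therefore $(\epsi/\epsj)^2$, not $(\epsj/\epsi)^2$. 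Likewise the pressure term $\|\partial_x^{i-2}\tanT^{j+1}q\|$ costs $\epsi/\epsj$. Trading normal for tangential derivatives can only be absorbed if $\epsi\ll\epsj$. With your prescription ``$\epsi$ small, then $\epsj\ll\epsi$'' these factors are large, and the estimate does not close.

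The paper has the same constraint. Its reduction in Lemma~\ref{L01} trades one normal derivative for one tangential derivative, which produces the factor $\epsi/\epsj$ in $S_1^{(1)}$. That is why its proof takes $\epsi=\epsj/(100(C+1)(K+1))$, so $\epsi<\epsj$, even though the statement writes the reverse ordering.

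\textbf{Reversing the hierarchy exposes a second missing ingredient.} Once $\epsi\ll\epsj$, the bare $\epsj/\epsi$ in your closing inequality becomes fatal. Such a factor appears whenever a term with $i\le 1$ normal derivatives is controlled through a term with two. For example, $\|\partial_x\tanT^jv\|\lesssim\|\partial_x^2\tanT^{j-1}v\|+\cdots$ costs $\epsj/\epsi$, and $\|\tanT^jv\|\lesssim\|\partial_x^2\tanT^{j-2}v\|+\cdots$ costs $(\epsj/\epsi)^2$.

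The fix is to never pass through $\|\partial_x^2\tanT^{j-1}v\|$ itself. Instead, bound the $i=0,1$ terms directly by the right-hand side of the Stokes estimate for $\tanT^{j-1}v$ and $\tanT^{j-2}v$, as in \eqref{EQ20} and Lemma~\ref{L02}. That right-hand side consists of the commutators $[\tanT^{j-1},\Delta]v$ and $[\tanT^{j-1},\nabla]q$ plus the data $\|\tanT^{j}f\|$. The Leibniz expansion of these commutators starts at $j'\ge 1$, so every inverse ratio carries at least one extra factor $K\epsj$. This gives the factors $K\epsj^2/\epsi$ and $K\epsj^3/\epsi^2$ of \eqref{EQ43} and \eqref{EQ48}.

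These factors are small if you first fix the ratio $\epsi/\epsj$ small and then take $\epsj$ small. That two-scale choice, not $\epsj\ll\epsi$, is what closes the argument. Your algebraic route to the normal reduction, solving the system for $X_0$-derivatives instead of using \eqref{EQ22}, is a legitimate alternative, but it is subject to exactly the same constraint.
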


The theorem is proven in Section~\ref{Sec5}.

\section{Derivative reductions for the Stokes system}
\label{Sec4}
In this section, we state the normal and tangential derivative reduction estimates
for a smooth solution $u$ of \eqref{EQ13}--\eqref{EQ15} in terms of the vector fields introduced in Section~\ref{Sec2}.
We also recall from \cite{CKV} necessary auxiliary statements (see
also~\cite{JKL}).

\subsection{Normal derivative reductions}
\label{sec:normal}
Here we consider the boundary value problem~\eqref{EQ15}.

\begin{Lemma}
\label{L01}
For $i \geq 2$, we have
  \begin{align}
  \begin{split}
    \Vert
     \partial_{x}^{i} \tanT^{j}  v
    \Vert
    +
    \|\partial_x^{i-1} \tanT^{j} q\|
   &  \lec
    \Vert
     \partial_{x}^{i-2}\tanT^{j+1} v
    \Vert_{H^1}  
    +
     \Vert
     \partial_{x}^{i-2}\tanT^{j} v
    \Vert
     \\&\indeq
     +
     \|\partial_x^{i-2}[\tanT^j,\Delta]v\|
     +\|\partial_x^{i-2}[\tanT^j,\nabla]q\|
    .
    \end{split}
   \label{EQ19}
  \end{align}
Similarly, for $i=1$ and $j \geq 1$, we have
\begin{align}
  \begin{split}
    \Vert
     \partial_{x} \tanT^{j} v
    \Vert
    +\|\tanT^j q\|
   &  \lec
    \Vert
    [\tanT^{j-1},\Delta] v
    \Vert
    +
    \Vert
     [\tanT^{j-1},\nabla] q
    \Vert
    +
   \Vert
     \tanT^j f
    \Vert.
  \end{split}
   \label{EQ20}
  \end{align}
\end{Lemma}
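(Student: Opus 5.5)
We apply $\tanT^j$ to the system \eqref{EQ15} and view $(\tanT^j v,\tanT^j q)$ as the solution of a Stokes problem with a forcing term. Writing $V=\tanT^{j}v$ and $Q=\tanT^j q$, we have
\begin{align*}
-\Delta V+\nabla Q = [\tanT^j,\Delta]v-[\tanT^j,\nabla]q =: F,
\qquad
\nabla\cdot V = [\tanT^j,\nabla\cdot]v =: G .
\end{align*}
On $\partial\Omega$ the boundary value is $V=\tanT^j\nabla\phi$. It is tangentially differentiated data, but we will not estimate it directly. Since $\nabla\cdot v=0$, the divergence commutator $G$ involves only first derivatives of lower tangential powers of $v$. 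The basic tool is the $H^2\times H^1$ elliptic regularity of the stationary Stokes system near the analytic boundary. This is the classical Cattabriga/ADN estimate: one controls $\Vert\partial^2 V\Vert+\Vert\nabla Q\Vert$ by $\Vert F\Vert+\Vert G\Vert_{H^1}$, plus boundary data and lower-order terms.

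For the normal derivative reduction \eqref{EQ19} we do not rely on global regularity. We proceed as in \cite{CKV}, using the decomposition of Proposition~\ref{P01}.
\begin{enumerate}
\item Write $\partial_x^i=\partial_x^{i-2}\partial_x^2$ and decompose each second derivative. Any derivative containing at least one tangential field $T_k$ is directly bounded by $\Vert\partial_x^{i-2}\tanT^{j+1}v\Vert_{H^1}$, up to lower-order terms. Commuting $T_k$ past $\partial_x$ produces analytic coefficients times fewer derivatives, which are absorbed into $\Vert\partial_x^{i-2}\tanT^jv\Vert$ and its $H^1$ counterpart.
\item The purely normal part $X_0^2V$ is recovered from the equation. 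Using $-\Delta V+\nabla Q=F$ and $\nabla\cdot V=G$, we solve for $X_0^2 V$ and $X_0Q$ in terms of $F$, $\nabla G$, and mixed or tangential derivatives. This is possible because the Stokes operator is elliptic and the normal direction is noncharacteristic. Concretely, taking the normal component of the divergence equation after applying $X_0$ gives $X_0^2V\cdot\nu$. The momentum equation then gives $X_0^2 V_\tau$ and $X_0 Q$, with $\nabla Q$ tangential parts controlled by $\tanT$-derivatives of $q$.
\item Applying $\partial_x^{i-2}$ to these identities yields \eqref{EQ19}.
\end{enumerate}
The tangential derivatives of $q$ in step~2 can be traded via the equation for second derivatives of $v$ involving at least one $T$. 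In the interior region $\Omega^{\delta_0}$ all derivatives are tangential combinations by item~3 of Proposition~\ref{P01}, so only $\Omega_{\delta_0}$ requires this argument.

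For \eqref{EQ20} we use the variational $H^1\times L^2$ estimate for Stokes. Write $V=\tanT^{j}v$, $Q=\tanT^jq$, where now $V=T\,\tanT^{j-1}v$. Pairing with test functions and using the Ne\v cas inequality $\Vert Q-\bar Q\Vert\lec\Vert\nabla Q\Vert_{H^{-1}}$ bounds $\Vert\nabla V\Vert+\Vert Q\Vert$ by $\Vert F\Vert_{H^{-1}}$ and the divergence, which lies in $L^2$. The divergence of $\tanT^j v$ is $[\tanT^j,\nabla\cdot]v$. Through the relation $\nabla\cdot v=0$, which came from $\nabla\cdot u=f$ and the shift by $\nabla\phi$, the remaining source is represented by $\Vert\tanT^j f\Vert$. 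Meanwhile $F$ with one tangential derivative pulled out gives $[\tanT^{j-1},\Delta]v$ and $[\tanT^{j-1},\nabla]q$ in $L^2$.

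The main obstacle is the boundary term. The trace of $\tanT^j v$ equals $\tanT^j\nabla\phi$, which is nonzero, so the homogeneous estimates do not apply directly. I would try to absorb it by working with $u=v-\nabla\phi$, which vanishes on $\partial\Omega$ and therefore satisfies $\tanT^ju=0$ there since the $T_k$ are tangential. The $\phi$ contributions are then controlled through elliptic regularity of \eqref{EQ13} by $f$-terms such as $\Vert\tanT^jf\Vert$. Getting the orders exactly right in this conversion is the delicate part. It also requires checking that the mean of $Q$ is handled, which I would do via the normalization of $q$ or the compatibility condition $\int_\Omega f=0$.
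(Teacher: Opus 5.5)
\textbf{There is a genuine gap in your proof of \eqref{EQ19}, in step~2.}

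Near $\partial\Omega$, the divergence equation differentiated by $X_0$ determines $X_0^2V$ in the direction transversal to the boundary. The corresponding component of the momentum equation then gives $X_0Q$. The remaining components, however, determine $X_0^2V_\tau$ only in terms of $TQ$. You also need $TQ$ for the tangential part of $\partial_x^{i-1}\tanT^jq$ on the left-hand side. So what step~2 actually yields has $\Vert\partial_x^{i-2}\tanT^{j+1}q\Vert$ on the right, and that term is not in \eqref{EQ19}.

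Your proposed trade is circular. By the equation, $TQ$ is a tangential component of $F+\Delta V$, and that again contains $X_0^2V_\tau$. No pointwise manipulation can close this. In $\{x_d>0\}$, take $v=(\tfrac a2x_d^2,0,\dots,0)$ and $q=ax_1$. This pair solves $-\Delta v+\nabla q=0$ and $\nabla\cdot v=0$. At every point of $\{x_d=0\}$, the quantities $F$, $G$, $v$, $\nabla v$ and all tangential and mixed second derivatives of $v$ vanish, yet $\partial_{x_d}^2v_1=\partial_{x_1}q=a$. The tangential pressure gradient must therefore come from the global Stokes problem.

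That is what the paper does. It applies $\partial_x^{i-2}\tanT^j$ to \eqref{EQ15} and invokes the $H^2$ estimate \eqref{EQ22} for $w=\partial_x^{i-2}\tanT^jv$, with forcing $\partial_x^{i-2}[\tanT^j,\Delta]v-\partial_x^{i-2}[\tanT^j,\nabla]q$. In \eqref{EQ22} the boundary enters only through $\Vert Tw\Vert_{H^1}$: the trace of $w$ is unknown because $\partial_x^{i-2}$ contains normal derivatives, but it is controlled by its tangential derivatives. The full pressure gradient is then controlled with no tangential pressure term on the right. A version of \eqref{EQ19} with the extra term $\Vert\partial_x^{i-2}\tanT^{j+1}q\Vert$ could in fact be absorbed in Section~\ref{Sec5}, since $\epsi\ll\epsj$, but that is not the stated inequality.

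\textbf{For \eqref{EQ20}, the key idea is also missing.} Using the $H^1\times L^2$ variational estimate for $\tanT^jv$ forces $F$ into $H^{-1}$, and this causes several problems.
\begin{enumerate}
\item The splitting $[\tanT^j,\Delta]=T[\tanT^{j-1},\Delta]+[T,\Delta]\tanT^{j-1}$, and its analogue for $\nabla$, leaves $\Vert\tanT^{j-1}v\Vert_{H^1}+\Vert\tanT^{j-1}q\Vert$, which are not on the right of \eqref{EQ20}.
\item The mean of $\tanT^jq$ is not fixed by normalizing $q$ or by $\int_\Omega f\,dx=0$.
\item The Dirichlet datum $\tanT^j\nabla\phi$ is left unresolved.
\item $\Vert\tanT^jf\Vert$ cannot come from the divergence, since $\nabla\cdot\tanT^jv=[\nabla\cdot,\tanT^j]v$ involves only $v$.
\end{enumerate}

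The paper instead works one tangential order lower. The pair $(\tanT^{j-1}v,\tanT^{j-1}q)$ solves the Stokes system with $L^2$ forcing exactly $[\tanT^{j-1},\Delta]v-[\tanT^{j-1},\nabla]q$ and Dirichlet datum $\tanT^{j-1}\nabla\phi$. Since $T$ is first order, $\Vert\partial_x\tanT^jv\Vert+\Vert\tanT^jq\Vert\lec\Vert\tanT^{j-1}v\Vert_{H^2}+\Vert\nabla\tanT^{j-1}q\Vert$. The $H^2\times H^1$ Stokes estimate then applies directly. The datum is handled by the trace theorem and $H^2$ regularity for \eqref{EQ13}, which give $\Vert\tanT^{j-1}\nabla\phi\Vert_{H^{3/2}(\partial\Omega)}\lec\Vert\tanT^j\phi\Vert_{H^2}\lec\Vert\tanT^jf\Vert$, as in \eqref{EQ25}. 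There is no need to pass to $u=v-\nabla\phi$.
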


We begin by recalling the $H^2$-regularity for the stationary Stokes system
\begin{align*}
\begin{split}
    -\Delta v+\nabla q&=g\inin{\Omega},
    \\
    \nabla\cdot v&=0\inin{\Omega},
\end{split}
   \llabel{EQ21}
\end{align*}
which reads
\begin{align}
    \|v\|_{H^2}+\|\nabla q\|_{L^2}\lec\|g\|_{L^2}+\|Tv\|_{H^1}+\|v\|_{L^2}
    .
    \label{EQ22}
\end{align}

\begin{proof}[Proof of Lemma~\ref{L01}]
Using \eqref{EQ15}, we compute
\begin{align}
    \begin{split}
        -\Delta \partial_x^{i-2}\tanT^j v
        +\nabla\partial_x^{j-2}\tanT^j q
        &=
        \partial_x^{i-2}[\tanT^j,\Delta]v
        -\partial_x^{i-2}[\tanT^j,\nabla]q
        +\partial_x^{i-2}\tanT^j(-\Delta v+\nabla q)
        \\&
        =
        \partial_x^{i-2}[\tanT^j,\Delta]v
        -\partial_x^{i-2}[\tanT^j,\nabla]q
        .
        \llabel{EQ23}
    \end{split}
\end{align}
By the $H^2$-regularity estimate~\eqref{EQ22}, we get~\eqref{EQ19}.

To prove~\eqref{EQ20}, let $j\geq1$. We have
\begin{align}
    \begin{split}
        -\Delta\tanT^{j-1}v+\nabla \tanT^{j-1}q
        &=
        [\tanT^{j-1},\Delta]v
        -[\tanT^{j-1},\nabla]q
        +\tanT^{j-1}(-\Delta v+\nabla q)
        \\&=
        [\tanT^{j-1},\Delta]v
        -[\tanT^{j-1},\nabla]q
        .
        \llabel{EQ24}
    \end{split}
\end{align}
The standard $H^2$-regularity estimate for the Stokes equation leads to
\begin{align}
\begin{split}
    \|\partial_x\tanT^jv\|+\|\tanT^jq\|
    &\lec\|[\tanT^{j-1},\Delta]v\|
        +\|[\tanT^{j-1},\nabla]q\|
        +\|\tanT^{j-1}(\nabla\phi\restrx)\|_{H^\frac{3}{2}(\partial\Omega)}
    \\&
    \lec
    \|[\tanT^{j-1},\Delta]v\|
        +\|[\tanT^{j-1},\nabla]q\|
        +\|\tanT^{j}\phi\|_{H^2}
    \\&
    \lec
    \|[\tanT^{j-1},\Delta]v\|
        +\|[\tanT^{j-1},\nabla]q\|
        +\|\tanT^{j}f\|
        ,
        \label{EQ25}
    \end{split}
\end{align}
where in the last step of~\eqref{EQ25} we used the $H^2$-regularity for the Laplace equation,~\eqref{EQ13}.
\end{proof}

\subsection{Tangential derivative reduction}
The following lemma allows us 
to reduce the number of tangential derivatives.

\begin{Lemma}
\label{L02}
For $j\geq 2$  we have
\begin{align}
    \begin{split}
        \|\tanT^j v\|+\|\tanT^{j-1}q\|\lec
        \|[\tanT^{j-2},\Delta]v\|+\|[\tanT^{j-2},\nabla]q\|+\|\tanT^{j-1}f\|.
    \end{split}
    \label{EQ26}
\end{align}
\end{Lemma}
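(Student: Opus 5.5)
The plan is to argue exactly as in the proof of \eqref{EQ20} in Lemma~\ref{L01}, but with one tangential derivative converted into one full derivative. We apply $\tanT^{j-2}$ to the system \eqref{EQ15}. Since $-\Delta v+\nabla q=0$ and $\nabla\cdot v=0$ in $\Omega$, the pair $(\tanT^{j-2}v,\tanT^{j-2}q)$ satisfies
\begin{align*}
-\Delta\tanT^{j-2}v+\nabla\tanT^{j-2}q
=[\tanT^{j-2},\Delta]v-[\tanT^{j-2},\nabla]q .
\end{align*}
It also satisfies
\begin{align*}
\nabla\cdot\tanT^{j-2}v=[\nabla\cdot,\tanT^{j-2}]v ,
\qquad
\tanT^{j-2}v\restrx=\tanT^{j-2}(\nabla\phi\restrx) .
\end{align*}
Here the tangential character of the $T_k$ ensures that the boundary trace makes sense as a tangential derivative of the boundary data. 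If needed, we absorb the divergence commutator by the same device as in~\eqref{EQ25}: we note that $[\nabla\cdot,\tanT^{j-2}]v$ is a lower-order term involving at most $j-2$ derivatives of $v$ of which one is full, and we treat it together with the right-hand side commutators. We keep the estimate at the level written in~\eqref{EQ26}, with the divergence commutator implicitly included in $[\tanT^{j-2},\nabla]$-type terms, as the paper does in Lemma~\ref{L01}.

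We then invoke the standard $H^2$-regularity for the stationary Stokes problem with inhomogeneous Dirichlet data:
\begin{align*}
\|\tanT^{j-2}v\|_{H^2}+\|\nabla\tanT^{j-2}q\|
\lec\|[\tanT^{j-2},\Delta]v\|+\|[\tanT^{j-2},\nabla]q\|
+\|\tanT^{j-2}(\nabla\phi\restrx)\|_{H^{3/2}(\partial\Omega)} .
\end{align*}
Because each $T_k$ is a first-order operator with analytic coefficients, we have
\begin{align*}
\|\tanT^jv\|\lec\|\tanT^{j-2}v\|_{H^2}
\quad\text{and}\quad
\|\tanT^{j-1}q\|\lec\|\nabla\tanT^{j-2}q\|+\|\tanT^{j-2}q\| .
\end{align*}
The zero-order piece $\|\tanT^{j-2}q\|$ is handled by normalizing $q$ to have zero mean, or it is regarded as absorbed into the implicit lower-order terms.

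For the boundary term, we use the trace theorem and the fact that $\tanT^{j-2}$ is tangential, which gives
\begin{align*}
\|\tanT^{j-2}(\nabla\phi\restrx)\|_{H^{3/2}(\partial\Omega)}
\lec\|\tanT^{j-2}\nabla\phi\|_{H^2}
\lec\|\tanT^{j-1}\phi\|_{H^2}+\text{commutators}.
\end{align*}
We then apply $H^2$ regularity for the Dirichlet Laplacian~\eqref{EQ13} to $\tanT^{j-1}\phi$, whose boundary values vanish because the fields are tangential and $\phi=0$ on $\partial\Omega'$. This gives $\|\tanT^{j-1}\phi\|_{H^2}\lec\|\tanT^{j-1}f\|$, as in the last line of~\eqref{EQ25}.

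The main obstacle is the commutator bookkeeping in the last two steps. Two points must be checked. First, $\nabla$ and $\tanT^{j-2}$ must be exchanged on $\phi$. Second, the divergence of $\tanT^{j-2}v$ is not zero, and its contribution must be controlled. I would first try to show that both produce only terms that are lower order in the tangential count and hence dominated by the displayed right-hand side. Failing that, I would enlarge the right side by such lower-order terms, which is harmless for the later analytic summation in Section~\ref{Sec5}.
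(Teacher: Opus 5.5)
Your argument is essentially the paper's: the paper gets \eqref{EQ26} by applying \eqref{EQ25} with $j$ replaced by $j-1$ and using $\|\tanT^j v\|\lec\|\partial_x\tanT^{j-1}v\|$. Estimate \eqref{EQ25} is itself proved exactly as you propose (apply tangential derivatives to \eqref{EQ15}, use $H^2$ Stokes regularity with the Dirichlet datum given by tangential derivatives of $\nabla\phi$ on $\partial\Omega$, then the trace theorem and Laplace regularity for \eqref{EQ13}), so you may simply cite it. The caveats you raise — the nonzero divergence of $\tanT^{j-2}v$, trading $\nabla$ for $\tanT$ on $\phi$, and your justification for the Laplace step, which does not work as stated since the $T_k$ are tangential to $\partial\Omega$ while $\phi$ vanishes on $\partial\Omega'$ — are passed over just as quickly in the paper's derivation of \eqref{EQ25}. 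Your extra zero-order pressure term is unnecessary, because $\|T_k w\|\lec\|\nabla w\|$ for each first-order field $T_k$.
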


\begin{proof}[Proof of Lemma~\ref{L02}]
The statement \eqref{EQ26} follows directly from~\eqref{EQ25}.
\end{proof}

\subsection{Leibniz-type formulae}
The following three estimates can be found in \cite{CKV}, and
we reproduce them here without proof.
Part~(i) derives upper bounds for the commutators with the Laplacian,
second part examines
the operator $\partial_{x}^{i} [\tanT^{j}, \Delta]$ for $i, j \geq 1$,
while the third provides similar estimates for the pressure term.

\begin{Lemma}
\label{L03}
(i)
For $j\geq1$, we have
\begin{align}
  \begin{split}
	\Vert 
	  [\tanT^j, \Delta] v 
	\Vert
	& \lec 
	\sum_{j'= 1}^{j}  \frac{j!}{(j-j')!} K^{j'} 
	\Vert  \partial_{x}^{2} \tanT^{j-j'}  v \Vert
	+ 
	\sum_{j'= 1}^{j}  \frac{j!}{(j-j')!} K^{j'}  j' 
	\Vert \partial_{x} \tanT^{j-j'} v \Vert
     ,
  \end{split}
	\label{EQ27}
\end{align} 
for some $K>0$ depending only on the vector fields from Proposition~\ref{P01}.\\
(ii)
For  $i,j\geq1$,
 we have
 \begin{align}
\Vert 
 \partial_{x}^{i} [\tanT^{j}, \Delta] v 
\Vert
\lec
\sum_{j'=0}^{j-1} \sum_{i' + i_3 =i} \binom{i' + j-j'}{i'}\frac{i! \ j!}{i_3 ! \ j'!}  K^{i' +j-j'}
\Vert 
\partial_{x}^{i_3 +2} \tanT^{j'} v
\Vert
,
\label{EQ28}
\end{align}
for some $K >0$.\\
(iii)
For $j\geq1$, we have
\begin{align}
    \|[T^j,\nabla]q\|\lec\sum_{j'=1}^j\frac{j!}{(j-j')!}K^{j'}\|\partial_x^1\tanT^{j-j'}q\|,
    \llabel{EQ29}
\end{align} for some $K>0$.
Furthermore, for $i,j\in\mathbb{N}$,
\begin{align}
    \|\partial_x^i[\tanT^j,\nabla]q\|\lec
    \sum^{j-1}_{j'=0}\sum_{i'=0}^{i}\binom{i'+j-j'}{i'}\frac{i!j!}{(i-i')!j'!}
    K^{i'+j-j'}\|\partial_x^{i-i'+1}\tanT^{j'}q\|
    ,
    \label{EQ30}
\end{align} for some $K>0$.
\end{Lemma}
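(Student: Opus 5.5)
Although the paper states Lemma~\ref{L03} without proof, here is how I would establish it. The approach is a commutator expansion in local coordinates, with analytic coefficient bounds. The vector fields $T_k = \sum_m a_{km}(x)\partial_{x_m}$ from Proposition~\ref{P01} have coefficients analytic on $\overline\Omega$. So there are constants $C_0$ and $K_0$ with $\Vert\partial_x^\alpha a_{km}\Vert_{L^\infty(\Omega)}\le C_0 K_0^{|\alpha|}|\alpha|!$. All estimates come from repeatedly commuting $\partial_x$ past $T$ and tracking the resulting combinatorial factors.

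For part~(i), I would start from the telescoping identity
\begin{equation*}
[\tanT^\beta,\Delta]=\sum_{l=1}^{j} T_{\beta_1}\cdots T_{\beta_{l-1}}[T_{\beta_l},\Delta]T_{\beta_{l+1}}\cdots T_{\beta_j}.
\end{equation*}
Each single commutator has the form $[T,\Delta]=-\sum (\Delta a_m)\partial_m-2\sum (\partial_k a_m)\partial_k\partial_m$, a second-order operator with analytic coefficients. Next I would move the remaining $l-1$ vector fields on the left through these coefficients and through $\partial_x$, using the Leibniz rule. Whenever a $T$ hits a coefficient, it produces a derivative of an analytic function, and this costs a factor $K$. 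Whenever a $T$ is commuted past $\partial_x$, it produces $[\partial_x,T]=(\partial_x a)\partial_x$, which is again first order with analytic coefficients. The tangential fields that are not absorbed by coefficients can be reordered into $\tanT^{j-j'}$; the reordering commutators are themselves tangential fields with analytic coefficients. What remains is bounding the counts. The number of ways to choose $j'$ of the $j$ fields to act on coefficients is $\binom{j}{j'}$. Analytic bounds on the $j'$-fold derivatives of the coefficients contribute a further $j'!\,K^{j'}$. Together these give $\frac{j!}{(j-j')!}K^{j'}$. The extra factor $j'$ in the first-order term comes from the additional derivative that falls on $\Delta a_m$.

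For parts~(ii) and~(iii), I would apply $\partial_x^{i}$ to the expansion above and distribute the derivatives by the Leibniz rule. Of the $i$ derivatives, $i'$ land on coefficients and the remaining $i_3=i-i'$ land on $v$. The term with $i'$ derivatives on a coefficient that has already absorbed $j-j'$ tangential derivatives is bounded by $(i'+j-j')!\,K^{i'+j-j'}$. Dividing by the natural $i'!\,(j-j')!$ produces exactly the binomial $\binom{i'+j-j'}{i'}$. The choice factors $\binom{i}{i'}\binom{j}{j'}$ then yield $\frac{i!\,j!}{i_3!\,j'!}$, up to the $i'!(j-j')!$ already accounted for. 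The pressure statements follow in the same way, with the first-order operator $\nabla$ in place of $\Delta$: $[T,\nabla]=-(\nabla a)\cdot\nabla$, so one derivative fewer falls on $q$. This gives $\partial_x^{i-i'+1}$ instead of $\partial_x^{i_3+2}$.

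The main obstacle is the combinatorics. When tangential fields pass through $\partial_x$, they generate new $\partial_x$ factors with coefficients. These must be rearranged into the normal-ordered form $\partial_x^{m}\tanT^{k}$ without losing more than geometric factors $K^{n}$. I would handle this by induction on $i+j$, using a majorant-series argument. Specifically, I would compare against the generating function of $(1-K(s+t))^{-1}$ and verify that the coefficients of the commutator expansion are dominated by those of this series. That domination yields the stated binomial-factorial weights.
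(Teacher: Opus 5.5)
The paper does not prove this lemma; it quotes it from earlier work. So your sketch has to stand on its own, and it fails at a concrete point in (ii).

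\textbf{The first-order terms in (ii).} You assert that after distributing $\partial_x^i$, the factor left on $v$ is always $\partial_x^{i_3+2}\tanT^{j'}v$. But your own formula $[T,\Delta]=-(\Delta a_m)\partial_m-2(\partial_k a_m)\partial_k\partial_m$ has a first-order part. Pushing the left fields through $\partial_k\partial_m$ creates more first-order pieces, via $\partial_k((\partial_m a_p)\partial_p\,\cdot\,)$; these are what the second sum in (i) records. When all $i$ derivatives fall on such a coefficient ($i_3=0$), you get $c\,\partial_x\tanT^{j'}v$. The right side of (ii) neither contains nor controls this term.

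Concretely, take $i=j=1$ and $v=x_2e_1$, which is even a Stokes solution with $q=0$. Then $\partial_{x_k}[T,\Delta]v=-(\partial_k\Delta a_2)\,e_1$, while $\partial_x^2v=\partial_x^3v=0$, so the right side of (ii) is zero. Hence (ii) as written would force $\nabla\Delta a_2\equiv0$ for every $T_\beta$. That fails for general analytic tangential fields, so no bookkeeping can prove (ii) as stated.

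What your method actually gives is (ii) with added lower-order terms $\Vert\partial_x^{i_3+1}\tanT^{j'}v\Vert$. They carry the same weights times an extra factor of order $i'+j-j'$, the analogue of the factor $j'$ in (i). Such terms look harmless for Section~\ref{Sec5}, but they must appear in the statement. Part (iii) has no such defect: $[T,\partial_k]=-(\partial_k a_m)\partial_m$ has no zeroth-order part, so normal ordering of $[\tanT^j,\nabla]$ keeps exactly one $x$-derivative on $q$.

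\textbf{The recursion behind (i).} The core of (i) is announced rather than carried out. The count $\binom{j}{j'}\,j'!\,K^{j'}$ covers only the left fields that land on the coefficient of $[T_{\beta_l},\Delta]$. Fields that cross $\partial_k\partial_m$ are also consumed, and they generate nested commutators $[\tanT^n,\partial_x]$ and products of several coefficient derivatives.

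The resulting recursion does close. Let $b_{n,m}$ be the total sup norm of the coefficients of $\partial_x\tanT^{n-m}$ in $[\tanT^n,\partial_k]$. Use $\Vert\tanT^{k}\partial_x a\Vert_{L^\infty}\le C_0K_0^{k+1}(k+1)!$, with the factor $d$ from $\sum_p$ absorbed into $C_0$. Telescoping and the Leibniz rule then give
\begin{equation*}
b_{n,m}\le C_0K_0^m\frac{n!}{(n-m)!}+\sum_{l=m}^{n}\sum_{k=0}^{m-2}\binom{l-1}{k}C_0K_0^{k+1}(k+1)!\,b_{l-1-k,\,m-1-k}.
\end{equation*}
Using $\sum_{l=m}^{n}(l-1)!/(l-m)!=n!/(m\,(n-m)!)$, the ansatz $b_{n,m}\le 2C_0K^m n!/(n-m)!$ reproduces itself once $K\ge 8C_0K_0$ with $C_0\ge1$.

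Part (i) then follows from $[\tanT^j,\partial_k^2]=[\tanT^j,\partial_k]\partial_k+\partial_k[\tanT^j,\partial_k]$. For this you also need to track $x$-derivatives of these product coefficients; that is where the factor $j'$ comes from. Part (ii) likewise needs joint bounds for $\partial_x^{i'}$ of these product coefficients, not bounds on a single analytic function.

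Finally, no reordering of the surviving fields is needed, since $\tanT^{j-j'}$ already stands for all ordered products. Your claim that $[T_a,T_b]$ lies in the analytic span of the $T$'s is not supplied by Proposition~\ref{P01}.
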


\section{Analyticity for the stationary Stokes system}
\label{Sec5}
In this section, we employ derivative-reduction arguments to control the analytic norm of $(v,q)$ by choosing a suitable pair~$(\epsi,\epsj)$. Instead of considering $\psi(v,q)$ directly, it is more convenient to classify the terms according to the number of normal derivatives.
First,
    \begin{align}
    \begin{split}
        \psi(v,q)
        &\leq \sum_{i\geq 2}\sum_{j\geq 1}\frac{1}{(i+j)!}\epsi^i\epsj^j\|\partial_x^i\tanT^jv\|
        +\sum_{i\geq 2}\sum_{j\geq 1}\frac{1}{(i+j)!}\epsi^{i-1}\epsj^j\|\partial_x^i\tanT^jq\|
        \\&\indeq+
        \sum_{j\geq 2}\frac{1}{(j+1)!}\epsi\epsj^j\|\partial_x\tanT^jv\|+
        \sum_{j\geq 2}\frac{1}{(j+1)!}\epsi\epsj^j\|\tanT^jq\|
        \\&\indeq+
        \sum_{j\geq 3}\frac{1}{j!}\epsj^j\|\tanT^jv\|+
        \sum_{j\geq 3}\frac{1}{j!}\epsj^j\|\tanT^{j-1}q\|
        +C\rho_0(f)
        \\&:=S_1+S_2+S_3
	+C\rho_0(f).
        \label{EQ31}
        \end{split}
    \end{align}
In the following subsections,
we use Lemmas~\ref{L01} and~\ref{L03}
to treat the 
first three sums in~\eqref{EQ31}.

\subsection{The sum $S_1$}\label{S_1}
Applying Lemma~\ref{L01}, we obtain
\begin{align}
    \begin{split}
        S_1&\lec
        \sum_{i\geq 2}\sum_{j\geq 1}
        \frac{\epsi^i\epsj^j}{(i+j)!}\left(
        \|\partial_x^{i-2}\tanT^{j+1}v\|_{H^1}
        +
        \|\partial_x^{i-2}\tanT^{j}v\|
        +
        \|\partial_x^{i-2}[T^j,\Delta]v\|
        +
        \|\partial_x^{i-2}[T^j,\nabla]q\|
        \right)
        \\&
        \lec
        \sum_{i\geq 2}\sum_{j\geq 1}
        \frac{\epsi^{i-1}\epsj^{j+1}}{(i+j)!}\|\partial_x^{i-1}\tanT^{j+1}v\|\frac{\epsi}{\epsj}
        +
        \sum_{i\geq 2}\sum_{j\geq 1}
        \frac{\epsi^{i-2}\epsj^{j+1}}{(i+j-1)!}\|\partial_x^{i-2}\tanT^{j+1}v\|\frac{\epsilon_1^2}{\epsj(i+j)}
        \\&\indeq
        +
        \sum_{i\geq 2}\sum_{j\geq 1}
        \frac{\epsi^{i-2}\epsj^{j}}{(i+j-2)!}\|\partial_x^{i-2}\tanT^{j}v\|\frac{\epsilon_1^2}{(i+j)(i+j-1)}
        \\&\indeq
        +
        \sum_{i\geq 2}\sum_{j\geq 1}
        \frac{\epsi^i\epsj^j}{(i+j)!}
        \left(\|\partial_x^{i-2}[T^j,\Delta]v\|
        +
        \|\partial_x^{i-2}[T^j,\nabla]q\|
        \right)
        \\&
        =:S_1^{(1)}+S_1^{(2)}+S_1^{(3)}+\text{Com}_1(v)+\text{Com}_1(q)
        .
        \label{EQ32}
    \end{split}
\end{align}
With $C$ denoting the implicit constant in the resulting inequality of \eqref{EQ32},
we choose
$\epsilon_1$ and $\epsilon_2$ such that
\begin{align}
    \begin{split}
        \epsi = \frac{1}{100(C+1)(K+1)}\epsj\leq \frac{1}{10^6(C+1)^4(K+1)^4}.
        \llabel{EQ33}
    \end{split}
\end{align}
Therefore, 
\begin{align}
    \begin{split}
        S_1^{(1)}+S_1^{(2)}+S_1^{(3)}\leq \frac{1}{20}\psi(v,q).
        \label{EQ34}
    \end{split}
\end{align}
By Lemma~\ref{L03}(i),(ii) and Fubini's theorem, we derive
\begin{align}
    \begin{split}
     &\sum_{i\geq 2}\sum_{j\geq 1}
        \frac{\epsi^i\epsj^j}{(i+j)!}
        \|\partial_x^{i-2}[T^j,\Delta]v\|
        =\sum_{i\geq 0}\sum_{j\geq 1}
        \frac{\epsi^{i+2}\epsj^j}{(i+j+2)!}
        \|\partial_x^{i}[T^j,\Delta]v\|
    \\&\indeq
    \lec
    \sum_{i\geq 0}\sum_{j\geq 1}
    \sum_{j'=0}^{j-1} \sum_{i' + i_3 =i} \frac{\epsi^{i+2}\epsj^j}{(i+j+2)!}
    \binom{i' + j-j'}{i'}\frac{i! \ j!}{i_3 ! \ j'!}  K^{i' +j-j'}
    \Vert 
    \partial_{x}^{i_3 +2} \tanT^{j'} v 
    \Vert
    \\&\indeq
    =\sum_{j'=0}^\infty\sum_{i_3=0}^\infty
    \left(
    \sum_{i=i_3}^\infty\sum_{j=j'+1}^\infty\binom{i-i_3+j-j'}{i-i_3}\frac{i!j!}{i_3!j'!}K^{i-i_3+j-j'}
    \frac{(i_3+j'+2)!}{(i+j+2)!}\epsi^{i-i_3}\epsj^{j-j'}
    \right)
    \\&\indeq\indeq
    \times
    \epsi^{i_3+2}\epsj^{j'}\frac{1}{(i_3+j'+2)!}\|\partial_x^{i_3+2}\tanT^{j'}v\|,
    \end{split}
   \llabel{EQ35}
  \end{align}
and then using
\begin{align}
    \begin{split}
        &\sum_{i=i_3}^\infty\sum_{j=j'+1}^\infty
        \binom{i-i_3+j-j'}{i-i_3}\binom{i_3+j'}{i_3}\binom{i+j}{i}^{-1}
        \\&\indeq\indeq\indeq\indeq\indeq\indeq\indeq\times
        \frac{(i_3+j'+1)(i_3+j'+2)}{(i+j+1)(i+j+2)}(K\epsi)^{i-i_3}(K\epsj)^{j-j'}
        \\&\indeq
	\leq
        \sum_{i=i_3}^\infty\sum_{j=j'+1}^\infty
        (K\epsi)^{i-i_3}(K\epsj)^{j-j'}
        \\&\indeq
	\leq
	\frac{1}{50(C+1)}.
    \end{split}
   \llabel{EQ36}
\end{align}
By the Vandermonde identity, we obtain
\begin{align}
    \begin{split}
        \text{Com}_1(v)\leq \frac{1}{50}\psi(v,q).
        \label{EQ37}
    \end{split}
\end{align}
We treat the commutator involving pressure similarly to~\eqref{EQ30} as
\begin{align}
    \begin{split}
        &\sum_{i\geq 2}\sum_{j\geq 1}\frac{\epsi^i\epsj^j}{(i+j)!}\|\partial_x^{i-2}[T^j,\nabla]q\|
        \\&\indeq
	\lec
        \sum_{i\geq 2}\sum_{j\geq 1}\sum_{j'=0}^{j-1}\sum_{i'=0}^{i-2}
        \binom{i'+j-j'}{i'}\frac{(i-2)!j!}{(i-2-i')!j'!}K^{i'+j-j'}
        \|\partial_x^{i-1-i'}\tanT^{j'}p\|\frac{\epsi^i\epsj^j}{(i+j)!}
        \\&\indeq
	\lec
        \sum_{j'=0}^\infty\sum_{i_3=2}^\infty\sum_{j=j'+1}^\infty\sum_{i=i_3}^\infty
        \underbrace{\binom{i-i_3+j-j'}{i-i_3}\binom{i_3+j'}{i_3}\binom{i+j}{i}^{-1}
        \frac{(i_3-1)i_3}{(i-1)i}}_{\leq 1}
        \\&\indeq\indeq\indeq\indeq\indeq\indeq\indeq
	\times
        (K\epsi)^{i-i_3}(K\epsj)^{j-j'}
        \frac{\epsi^{i_3}\epsj^{j'}}{(i_3+j')!}\|\partial_x^{i_3-1}\tanT^{j'}q\|
        .
        \end{split}
   \llabel{EQ38}
\end{align}
Therefore, we obtain
\begin{align}
    \text{Com}_1(q)\leq\frac{1}{50}\psi(v,q).
    \label{EQ39}
\end{align}
Combining~\eqref{EQ34},~\eqref{EQ37}, and~\eqref{EQ39}, we get
\begin{align}
    S_1\leq \frac{1}{10}\psi(v,q),
    \label{EQ40}
\end{align}
which can be absorbed by the left-hand side of~\eqref{EQ31}.

\subsection{The $S_2$ term}\label{S_2}
Using the normal derivative reductions from Lemma~\ref{L01}, we write
\begin{align}
    \begin{split}
        S_2&\lec\sum_{j\geq 2}\frac{\epsi\epsj^j}{(j+1)!}
        \left(
        \Vert
    [\tanT^{j-1},\Delta] v
    \Vert
    +
    \Vert
     [\tanT^{j-1},\nabla] q
    \Vert
    +
   \Vert
     \tanT^j f
    \Vert
    \right)
    \\&:=\text{Com}_2(v)+\text{Com}_2(q)+\rho(f).
    \label{EQ41}
    \end{split}
\end{align}
By Lemma~\ref{L03}(i),
we have
\begin{align}
  \begin{split}
  \text{Com}_2(v)
	& \lec 
	\sum_{j\geq 2}\sum_{j'= 1}^{j-1}  \frac{(j-1)!}{(j-j'-1)!} K^{j'} 
	\Vert  \partial_{x}^{2} \tanT^{j-j'-1}  v \Vert\frac{\epsi\epsj^j}{(1+j)!}
	\\&\indeq
    + 
	\sum_{j\geq 2}\sum_{j'= 1}^{j-1}  \frac{(j-1)!}{(j-j'-1)!} K^{j'}  j' 
	\Vert \partial_{x} \tanT^{j-j'-1} v \Vert\frac{\epsi\epsj^j}{(1+j)!}.
  \end{split}
	\llabel{EQ42}
\end{align} 
Let $j_3=j-j'-1$. Using Fubini's theorem again,
\begin{align}
    \begin{split}
        \text{Com}_2(v)&\lec\sum_{j'\geq 1}\sum_{j_3=0}^\infty(K\epsj)^{j'}\frac{\epsj}{\epsi}
        \frac{(j_3+1)(j_3+2)}{(j'+j_3+1)(j'+j_3+2)}
        \frac{\epsi^2\epsj^{j_3}}{(j_3+2)!}\|\partial_x^2\tanT^{j_3}u\|
        \\&\indeq+\sum_{j'\geq1}\sum_{j_3=0}^\infty(K\epsj)^{j'}\frac{j'(j_3+1)\epsj}{(j'+j_3+1)(j'+j_3+2)}
        \frac{\epsi\epsj^{j_3}}{(j_3+1)!}\|\partial_x\tanT^{j_3}u\|
        \\&\lec
        \underbrace{\frac{\epsj}{\epsi}\sum_{j'\geq 1}(K\epsj)^{j'}}_{\leq 2K\epsj^2{\epsi}^{-1}}\sum_{j_3=0}^\infty\frac{\epsi^2\epsj^{j_3}}{(j_3+2)!}\|\partial_x^2\tanT^{j_3}u\|
        \\&\indeq+\underbrace{\epsj
        \sum_{j'\geq 1}(K\epsj)^{j'}}_{\leq2K\epsj^2}\sum_{j_3=0}^\infty\frac{\epsi\epsj^{j_3}}{(j_3+1)!}\|\partial_x\tanT^{j_3}u\|
        \\&
	\leq\frac{1}{50}\psi(v,q).
        \label{EQ43}
    \end{split}
\end{align}
To estimate the commutator involving the pressure, we use Lemma~\ref{L03}(iii) and Fubini's theorem to obtain
\begin{align}
    \begin{split}
        \text{Com}_2(q)&\lec
        \sum_{j\geq 2}\sum_{j'=1}^{j-1}\frac{(j-1)!}{(j-j'-1)!}K^{j'}\|\partial_x^1\tanT^{j-j'-1}q\|
        \frac{\epsi\epsj^j}{(1+j)!}
        \\&\lec
        \sum_{j_3=0}^\infty
        \underbrace{\sum_{j'=1}^\infty\frac{\epsj(j_3+1)(j_3+2)(K\epsj)^{j'}}{\epsi(j_3+j'+1)(j_3+j'+2)}}_{\leq K\epsj^2\epsi^{-1}}
        \frac{\epsi^2\epsj^{j_3}}{(j_3+2)!}\|\partial_x\tanT^{j_3}p\|
        \leq\frac{1}{50}\psi(v,q).
        \label{EQ44}
    \end{split}
\end{align}
Combining~\eqref{EQ41},~\eqref{EQ43}, and~\eqref{EQ44}, we get
\begin{align}
    S_2\leq \frac{1}{25}\psi(v,q)+\rho(f),
    \label{EQ45}
\end{align}
where $\frac{1}{25}\psi(v,q)$ can also be absorbed by the left-hand side of~\eqref{EQ31}.

\subsection{The $S_3$ term}\label{S_3}
Finally, for $S_3$ we use Lemma~\ref{L03}(i) and obtain
\begin{align}
    \begin{split}
        S_3&\lec\sum_{j\geq 3}\frac{\epsj^j}{j!}
        \left(
        \Vert
    [\tanT^{j-2},\Delta] v
    \Vert
    +
    \Vert
     [\tanT^{j-2},\nabla] q
    \Vert
    +
   \Vert
     \tanT^{j-1} f
    \Vert
    \right)
    \\&:=\text{Com}_3(v)+\text{Com}_3(q)+\rho(f).
    \label{EQ46}
    \end{split}
\end{align}
Again, by Lemma~\ref{L03}(i), we derive
\begin{align}
    \begin{split}
        \text{Com}_3(v)& \lec 
	\sum_{j\geq 3}\sum_{j'= 1}^{j-2}  \frac{(j-2)!}{(j-j'-2)!} K^{j'} 
	\Vert  \partial_{x}^{2} \tanT^{j-j'-2}  v \Vert\frac{\epsj^j}{j!}
	\\&\indeq
    + 
	\sum_{j\geq 3}\sum_{j'= 1}^{j-2}  \frac{(j-2)!}{(j-j'-2)!} K^{j'}  j' 
	\Vert \partial_{x} \tanT^{j-j'-2} v \Vert\frac{\epsj^j}{j!}.
    \end{split}
   \llabel{EQ47}
\end{align}
Let $j_4=j-j'-2$. Fubini's theorem implies
\begin{align}
    \begin{split}
        \text{Com}_3(v)
        &\lec\sum_{j'\geq 1}\sum_{j_4=0}^\infty(K\epsj)^{j'}
        \frac{\epsj^2}{\epsi^2}
        \frac{(j_4+1)(j_4+2)}{(j'+j_4+1)(j'+j_4+2)}
        \frac{\epsi^2\epsj^{j_4}}{(j_4+2)!}\|\partial_x^2\tanT^{j_4}v\|
        \\&\indeq+\sum_{j'\geq1}\sum_{j_4=0}^\infty(K\epsj)^{j'}
        \frac{\epsj^2}{\epsi}
        \frac{j'(j_4+1)}{(j'+j_4+1)(j'+j_4+2)}
        \frac{\epsi\epsj^{j_4}}{(j_4+1)!}
        \|\partial_x\tanT^{j_4}v\|
        \\&\lec
        \underbrace{
        \frac{\epsj^2}{\epsi^2}\sum_{j'\geq 1}(K\epsj)^{j'}}_{2K\epsj^3\epsi^{-2}}
        \sum_{j_4=0}^\infty
        \frac{\epsi^2\epsj^{j_4}}{(j_4+2)!}\|\partial_x^2\tanT^{j_4}v\|
        \\&\indeq
        +
        \underbrace{
        \frac{\epsj^2}{\epsi}\sum_{j'\geq 1}(K\epsj)^{j'}}
        _{2K\epsj^3\epsi^{-1}}
        \sum_{j_4=0}^\infty\frac{\epsi\epsj^{j_4}}{(j_4+1)!}\|\partial_x\tanT^{j_4}v\|
        \\&\leq\frac{1}{50}\psi(v,q).
        \label{EQ48}
    \end{split}
\end{align}
Similarly, we use Lemma~\ref{L03}(iii) and Fubini's theorem to obtain
\begin{align}
    \begin{split}
        \text{Com}_3(q)&\lec
        \sum_{j\geq 3}\sum_{j'=1}^{j-2}\frac{(j-2)!}{(j-j'-2)!}K^{j'}\|\partial_x^1\tanT^{j-j'-2}q\|
        \frac{\epsj^j}{j!}
        \\&\lec
        \sum_{j_4=0}^\infty
        \underbrace{\sum_{j'=1}^\infty\frac{\epsj^2(j_4+1)(j_4+2)(K\epsj)^{j'}}{\epsi^2(j_4+j'+1)(j_4+j'+2)}}_{\leq K\epsj^3\epsi^{-2}}
        \frac{\epsi^2\epsj^{j_4}}{(j_4+2)!}\|\partial_x\tanT^{j_4}q\|
        \\&
        \leq\frac{1}{50}\psi(v,q).
        \label{EQ49}
    \end{split}
\end{align}
Combining~\eqref{EQ46},~\eqref{EQ48}, and~\eqref{EQ49}, we get
\begin{align}
    S_3\leq \frac{1}{25}\psi(v,q)+\rho(f),
    \label{EQ50}
\end{align}
where $\frac{1}{25}\psi(v,q)$ can also be absorbed by the left-hand side of~\eqref{EQ31}.

\subsection{Conclusion of the proof}\label{subsection: conclusion}
\begin{proof}[Proof of Theorems~\ref{main thm} and~\ref{main thm_Stokes}]
Combining all the estimates \eqref{EQ31}, \eqref{EQ40}, \eqref{EQ45}, and \eqref{EQ50}, we write
\begin{align}
    \begin{split}
        \psi(v,q)\leq\frac{1}{5}\psi(v,q)+2\rho(f)+C\rho_0(f).
        \llabel{EQ51}
    \end{split}
\end{align}
Therefore, we have
\begin{align}
    \rho(v)\leq\psi(v,q)\lec\rho(f),
   \llabel{EQ52}
\end{align}
which concludes the proof of Theorem~\ref{main thm_Stokes}.
Recall that $u=v-\nabla\phi$, where $\phi$ is defined by~\eqref{EQ13}. Using the harmonic estimate \begin{align}
    \rho(\nabla\phi)\lec \rho(f),
    \llabel{EQ53}
\end{align}
we obtain
\begin{align}
    \begin{split}
        \rho(u)\leq\psi(v,q)+\rho(\nabla\phi)\lec \rho(f)
	,
        \llabel{EQ54}
    \end{split}
\end{align}
and the proof of Theorem~\ref{main thm_Stokes}
(and thus also of Theorem~\ref{main thm})
is concluded.
\end{proof}

\section*{Acknowledgments}
The authors were supported in part by the NSF grant DMS-2205493.

\end{document}